\documentclass[12pt]{amsart}

\textwidth=5.55in \oddsidemargin=.4in \evensidemargin=0.4in

\usepackage{amssymb}
\usepackage{graphicx}

\usepackage{lineno}

\usepackage{color}

\newtheorem{theorem}{Theorem}

\newtheorem{problem}{Problem}

\newtheorem{them}{Theorem}


\theoremstyle{definition}

\theoremstyle{remark}

\begin{document}

\title[Upper bounds for domination related parameters]{ Upper bounds for domination related parameters in graphs on surfaces}
\author{Vladimir Samodivkin}
\address{Department of Mathematics, UACEG, Sofia, Bulgaria}
\email{vl.samodivkin@gmail.com}
\today
\keywords{total domination, connected domination, weakly connected domination, (total)restrained/roman bondage number, restricted edge connectivity, Euler characteristic, orientable/nonorientable genus, girth, average degree}

\begin{abstract}
In this paper we give tight upper bounds on the total domination number, 
the weakly connected domination number and the connected domination number of a graph 
in terms of order and Euler characteristic. 
We also present upper bounds for the restrained bondage number,  the total restrained bondage number 
and the restricted edge connectivity of graphs in terms of the  orientable/nonorientable genus and maximum degree. 
\end{abstract}

\maketitle

{\bf  MSC 2012}: 05C69

 \linenumbers

\section{Introduction} \label{Intro}
All graphs considered in this paper are finite, undirected, loopless, and without multiple edges.
We denote the vertex set and the edge set of a graph $G$ by $V(G)$ and $ E(G),$  respectively.   
For a vertex $x$ of $G$,  $N(x)$ denotes the set of all  neighbors of $x$ in $G$ 
   and the degree of $x$ is $\deg(x) = |N(x)|$.  
	The minimum and maximum degree among the vertices of
 $G$ is denoted by $\delta (G)$ and $\Delta (G)$, respectively.
  For $e=xy \in E(G)$, let 
$\xi (e) = deg (x) + deg (y) - 2$ and 
$\xi(G)= min\{\xi(e)$ : $e \in E(G)\}$. The parameter 
$\xi(G)$ is called the minimum edge-degree of $G$.
We let $\left\langle U \right\rangle$ denote the subgraph of $G$ induced
by a subset $U \subseteq V(G)$.
 The girth of a graph $G$, denoted as $g(G)$, is the length of a shortest
cycle in $G$; if $G$ is a forest then $g(G) = \infty$. 
 
An orientable compact 2-manifold $\mathbb{S}_h$ or orientable surface $\mathbb{S}_h$ (see \cite{Ringel}) of genus $h$ is
obtained from the sphere by adding $h$ handles. Correspondingly, a  non-orientable compact
2-manifold $\mathbb{N}_k$ or non-orientable surface $\mathbb{N}_k$ of genus $k$ is obtained from the sphere by
adding $k$ crosscaps. Compact 2-manifolds are called simply surfaces throughout the paper. 
The Euler characteristic is  defined by $\chi(\mathbb{S}_h) = 2 - 2h$, $h\geq  0$, and $\chi(\mathbb{N}_q) = 2 - q$, $q\geq 1$.
A connected graph $G$ is embeddable on a surface $\mathbb{M}$ if it admits a drawing
on the surface with no crossing edges. Such a drawing of $G$ on the surface $\mathbb{M}$ 
is called an embedding of $G$ on $\mathbb{M}$. Notice that there can be many different
embeddings of the same graph $G$ on a particular surface $\mathbb{M}$. 
An embedding of a graph $G$ on surface $\mathbb{M}$ is said to be $2$-cell if every face of the embedding is
homeomorphic to a disc. 
The set of faces of a particular embedding of $G$ on $\mathbb{M}$ is denoted by $F(G)$.
The orientable genus of a graph $G$ is the smallest integer $g = g(G)$ such
that $G$ admits an embedding on an orientable topological surface $\mathbb{M}$ of genus
$g$. The non-orientable genus of $G$ is the smallest integer $\overline{g} = \overline{g}(G)$
 such that $G$ can be embedded on a non-orientable topological surface $\mathbb{M}$ of genus $\overline{g}$.
Note that (a) every embedding of a graph $G$ on $\mathbb{S}_{g(G)}$ is $2$-cell (\cite{yo}), and 
(b)  if a graph $G$ has  non-orientable genus $h$ then $G$ has  $2$-cell embedding on $\mathbb{N}_h$(\cite{pppv}). 
Let a graph $G$ be $2$-cell embedded on a surface $\mathbb{M}$. 
Set $|G| = |V (G)|$, $\|G\| = |E(G)|$, and $f(G) = |F(G)|$. 
The Euler's formula states
\[
|G| - \|G\| + f(G) = \chi(\mathbb{M}).
\]
 

Let $G$ be a non-trivial connected graph and $S \subseteq E(G)$. If $G - S$ is
disconnected and contains no isolated vertices, then $S$ is called a restricted edge-cut
of $G$. The restricted edge-connectivity of $G$, denoted by $\lambda^{\prime}(G)$, 
is defined as the minimum cardinality over all restricted edge-cuts of $G$. 
Besides the classical edge-connectivity $\lambda(G)$, the
parameter $\lambda^{\prime}(G)$ provides a more accurate measure 
of fault-tolerance of networks than the classical
edge-connectivity (see \cite{E}).

A subset $D$ of $V (G)$ is dominating in $G$ if every vertex of $V (G) - D$ has at least
one neighbor in $D$.  The domination number of $G$, 
denoted by $\gamma(G)$, is the size of its smallest dominating set. 
When $G$ is connected, we say $D$ is a connected dominating set if 
 $\left\langle D \right\rangle$ is connected. The connected domination
number of $G$ is the size of its smallest connected dominating set, and is denoted by
$\gamma_c(G)$.
For a connected graph $G$ and any non-empty $S \subseteq V (G)$, $S$ is called
a weakly connected dominating set of $G$ if the subgraph obtained from
$G$ by removing all edges each joining any two vertices in $V (G)-S$ is connected. 
The weakly connected domination number $\gamma_w(G)$ of $G$
 is the minimum cardinality among all weakly connected dominating sets in $G$.

A set $R\subseteq V(G)$ is a restrained dominating set if every vertex not in $R$ is adjacent to
a vertex in $R$ and to a vertex in $V(G) - R$. Every graph $G$ has a restrained dominating set, 
since $R = V(G)$ is such a set. The restrained domination number of $G$, denoted by $\gamma_r (G)$, 
is the minimum cardinality of a restrained dominating set of G.
One measure of the stability of the restrained domination number of $G$ 
under edge removal is the restrained bondage number $b_r(G)$, 
defined in \cite{HP} by Hattingh and Plummer 
as the smallest number of edges whose removal from $G$ results 
in a graph  with larger restrained domination number. 

A set $S\subseteq V (G)$ is a total restrained dominating set, denoted $TRDS$, if
every vertex is adjacent to a vertex in $S$ and every vertex in $V (G)-S$ is also adjacent to a
vertex in $V (G)-S$. The total restrained domination number of $G$, denoted by $\gamma_{tr}(G)$, is the
minimum cardinality of a total restrained dominating set of $G$. 
 Note that any isolate-free graph $G$ has a TRDS, since $V(G)$ is a TRDS.
The total restrained bondage number $b_{tr}(G)$ of a graph $G$ with no isolated vertex, is the
cardinality of a smallest set of edges $E_1\subseteq E(G)$ for which (1) $G-E_1$ has no isolated vertex,
and (2) $\gamma_{tr}(G-E_1) > \gamma_{tr}(G)$. In the case that there is no such subset $E_1$, 
we define $b_{tr}(G) = \infty$.

A labelling $f : V(G) \rightarrow \{0, 1, 2\}$ is a Roman dominating function (or simply an
RDF), if every vertex $u$ with $f (u) = 0$ has at least one neighbour $v$ with $f (v) = 2$.
Define the weight of an RDF $f$ to be $w( f ) = \Sigma_{v\in V(G)} f (v)$. The Roman domination
number of G is $\gamma_R(G) = \min\{w( f ) : f \mbox{ is an RDF}\}$.
The Roman bondage number $b_R(G)$ of a graph $G$ is defined to be the minimum cardinality of all sets
$E \subseteq E(G)$ for which $\gamma_R(G - E) > \gamma_R(G)$. 

The rest of the paper is  organized as follows. 
Section 2 contains known results. 
In section 3 we give tight upper bounds on the total domination number, 
the weakly connected domination number and the connected domination number of a graph 
in terms of order and Euler characteristic.  
In Section 4, we present upper bounds on the restrained bondage number,  the total restrained bondage number 
and the restricted edge connectivity of graphs in terms of the  orientable/nonorientable genus and maximum degree.

\section{Known results}\label{prelimi} 
We make use of the following results in this paper.

\begin{them}[Esfahanian and Hakimi~\cite{EH}] \label{rcon}
If $G$ is a connected graph with at least
four vertices and it is not a star graph, then 
$\lambda^{\prime}(G) \leq \xi (G)$.
\end{them}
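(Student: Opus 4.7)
The plan is to construct a restricted edge-cut of size at most $\xi(G)$. Pick an edge $e = xy$ realizing $\xi(e) = \xi(G)$ and let $S$ be the set of edges incident with $x$ or $y$ but different from $e$, so that $|S| = \deg(x) + \deg(y) - 2 = \xi(G)$. Since $|V(G)| \geq 4$, removing $S$ separates the pair $\{x,y\}$, which remains joined by $e$, from the rest of $V(G)$, so $S$ is an edge-cut. If the subgraph induced on $V(G)\setminus\{x,y\}$ has no isolated vertex, then $S$ is already a restricted edge-cut and we are done.

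Otherwise I would run an absorption argument: set $X_0 = \{x,y\}$ and, whenever some vertex $v \in V(G)\setminus X_{i-1}$ satisfies $N(v) \subseteq X_{i-1}$, move $v$ into $X_{i-1}$ to form $X_i$. Writing $\sigma_i$ for the number of edges between $X_i$ and $V(G)\setminus X_i$, absorption decreases the cut size by exactly $\deg(v)$, so $\sigma_i \leq \sigma_0 = \xi(G)$ throughout, while $\left\langle X_i \right\rangle$ always contains $e$ and never acquires an isolated vertex. The process halts either with $V(G)\setminus X_k \neq \emptyset$ free of isolated vertices, in which case the edge-cut between $X_k$ and its complement is the desired restricted edge-cut, or with $X_k = V(G)$.

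The core of the proof is to show that the second alternative forces $G$ to be a star, contradicting the hypothesis. Telescoping $\sigma_i = \sigma_{i-1} - \deg(v_i)$ from $\sigma_0 = \xi(G)$ down to $\sigma_k = 0$ yields $\sum_i \deg(v_i) = \deg(x) + \deg(y) - 2$, which via the handshake lemma forces every edge of $G$ to be incident with $x$ or $y$, and in particular $\deg(v_i) \leq 2$ for each absorbed vertex. I would then split into cases: if $\min\{\deg(x),\deg(y)\} = 1$, the structure immediately collapses to a star; if both $\deg(x),\deg(y) \geq 2$, comparing $\xi(e)$ with $\xi(xv)$ for a neighbor $v \notin \{x,y\}$ of $x$ and using the minimality of $\xi(e)$ pins down $\deg(x) = \deg(y) = 2$, after which $|V(G)| \geq 4$ combined with connectivity yields a contradiction. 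This final case analysis, in which both the minimality of $e$ and the non-star hypothesis must be used in conjunction, is the main obstacle I anticipate.
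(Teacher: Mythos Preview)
The paper does not prove this statement at all: Theorem~A is quoted from Esfahanian and Hakimi~\cite{EH} in the ``Known results'' section and is used only as a black box in the proof of Theorem~\ref{main3}. So there is no paper proof to compare against.

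Your argument is nonetheless sound and is essentially the classical Esfahanian--Hakimi proof. Picking an edge $e=xy$ of minimum edge-degree and cutting off $\{x,y\}$ by the $\xi(G)$ edges in $S$ is the standard opening; the absorption process you describe is exactly the usual device for repairing isolated vertices on the complementary side, and your accounting $\sigma_i=\sigma_{i-1}-\deg(v_i)$ is correct because each absorbed $v_i$ has $N(v_i)\subseteq X_{i-1}$. The termination dichotomy and the telescoping identity $\sum_i\deg(v_i)=\deg(x)+\deg(y)-2$ when $X_k=V(G)$ are both right, and your handshake step correctly shows every edge then meets $\{x,y\}$. In the final case analysis, once you force $\deg(v)=2$ for a neighbour $v\neq y$ of $x$, note that $N(v)\subseteq\{x,y\}$ immediately gives $N(v)=\{x,y\}$, so $\{x,y,v\}$ spans a triangle carrying all three edges of $G$; connectivity with $|V(G)|\ge 4$ then gives the contradiction directly, and you never actually reach a $P_4$ configuration. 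With that small sharpening the case analysis is clean.
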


\begin{them}[Hatting and Plummer~\cite{HP}]\label{help1}
 If $\delta (G) \geq 2$, then $b_r (G) \leq \xi (G)$.
\end{them}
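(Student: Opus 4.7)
The plan is to exhibit an edge set $E_1 \subseteq E(G)$ of cardinality $\xi(G)$ whose deletion strictly increases $\gamma_r$. I would pick an edge $e = xy$ with $\xi(e) = \xi(G)$ and let $E_1$ consist of all edges incident to $x$ or $y$ other than $xy$ itself, so that $|E_1| = (\deg(x)-1) + (\deg(y)-1) = \xi(G)$ and $\{x,y\}$ forms a $K_2$-component in $G' := G - E_1$. The hypothesis $\delta(G) \geq 2$ ensures $\deg(x), \deg(y) \geq 2$, which will be used throughout.

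The first step is the observation that every restrained dominating set of $G'$ must contain both $x$ and $y$, since the sole $G'$-neighbor of $x$ is $y$ (and symmetrically). Let $R'$ be a minimum RDS of $G'$, and argue by contradiction that $|R'| \leq \gamma_r(G)$. Since $E(G') \subseteq E(G)$, every neighbor relation certifying $R'$ as an RDS in $G'$ persists in $G$, so $R'$ is already an RDS of $G$. Combined with the minimality assumption this forces $|R'| = \gamma_r(G)$ and $\{x, y\} \subseteq R'$.

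The heart of the argument is then to test the two exchanges $R' \setminus \{x\}$ and $R' \setminus \{y\}$, each of size $\gamma_r(G) - 1$, either of which if successful would contradict the minimality of $\gamma_r(G)$. I would organise the obstructions into two families. The first family occurs when $N_G(x) \subseteq R'$ and $N_G(y) \subseteq R'$; in this case I would verify that $R' \setminus \{x,y\}$ is itself an RDS of $G$, using $\deg(x), \deg(y) \geq 2$ so that $x$ and $y$ still have neighbors inside $R' \setminus \{x,y\}$, and observing that no vertex of $V(G)\setminus R'$ is adjacent to $x$ or $y$ at all. This yields an RDS of size $\gamma_r(G) - 2$, a contradiction. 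The second family occurs when some $v \in V(G) \setminus R'$ has $N_G(v) \cap R' = \{y\}$ (or $\{x\}$); but then $v \neq x$, the edge $vy$ lies in $E_1$, and the passage to $G'$ strips $v$ of its unique $R'$-neighbor, directly contradicting that $R'$ is an RDS of $G'$.

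The main obstacle is laying out the case analysis so that these two families, together with the successful exchanges, really exhaust all possibilities; the use of $\delta(G) \geq 2$ is essential for ruling out the degenerate situations where $x$ or $y$ has no auxiliary neighbor available to serve as the required outside-$R'$ certificate. Once this is settled, the contradiction yields $\gamma_r(G') > \gamma_r(G)$, and hence $b_r(G) \leq |E_1| = \xi(G)$.
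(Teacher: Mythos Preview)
The paper does not give a proof of this statement at all: it appears in Section~2 (``Known results'') and is simply quoted from Hattingh and Plummer~\cite{HP}. So there is no in-paper argument to compare against.

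That said, your outline is correct and is essentially the Hattingh--Plummer argument. The case analysis you flag as ``the main obstacle'' does close cleanly once written out: if $R'\setminus\{y\}$ fails to be an RDS of $G$, one checks that the only obstruction not already ruled out (by $x\in R'$ or by your ``second family'' contradiction with $R'$ being an RDS of $G'$) is $N_G(y)\subseteq R'$; the symmetric statement holds for $R'\setminus\{x\}$; and if both hold you land in your ``first family'', where $R'\setminus\{x,y\}$ gives an RDS of size $\gamma_r(G)-2$. The hypothesis $\delta(G)\geq 2$ is used exactly where you indicate, to guarantee that $x$ and $y$ each retain a neighbour in $R'\setminus\{x,y\}$.
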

\begin{them}[Rad, Hasni, Raczek and Volkmann~\cite{rhrv}]\label{help2}
Let $G$ be a connected graph of order $n$, $n\geq 5$. Assume that $G$ has a path $x,y,z$ 
such that $deg(x) > 1$, $deg(z) > 1$ and $G-\{x,y,z\}$ has no isolated vertex. Then
 $b_{tr}(G) \leq deg(x) + deg(y) + deg(z) - 4$. 
\end{them}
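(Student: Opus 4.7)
The plan is to exhibit an edge set $E_1$ with $|E_1|\le\deg(x)+\deg(y)+\deg(z)-4$ whose removal both preserves the no-isolated-vertex condition and strictly increases $\gamma_{tr}$, thereby certifying $b_{tr}(G)\le|E_1|$.

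I would take $E_1$ to be the set of edges with exactly one endpoint in $\{x,y,z\}$. A count by endpoint gives $\deg(x)-1$ edges at $x$ (excluding $xy$), $\deg(z)-1$ at $z$ (excluding $yz$), and $\deg(y)-2$ at $y$ (excluding both $xy$ and $yz$), so $|E_1|\le\deg(x)+\deg(y)+\deg(z)-4$, with the inequality being strict precisely when $xz\in E(G)$. In $G-E_1$ the triple $\{x,y,z\}$ forms its own component (a $P_3$, or a $K_3$ if $xz$ is an edge), so these vertices are not isolated; any other vertex retains all its neighbours in $V(G)\setminus\{x,y,z\}$, which exist by the hypothesis that $G-\{x,y,z\}$ has no isolated vertex (with $n\ge 5$ guaranteeing that this set is non-empty). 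Hence $G-E_1$ is isolate-free.

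For the lower bound on $\gamma_{tr}(G-E_1)$, I would use that $\gamma_{tr}$ is additive over components together with the elementary fact $\gamma_{tr}(P_3)=\gamma_{tr}(K_3)=3$: a quick direct inspection shows that any proper subset of three mutually-close vertices either fails total domination or leaves some vertex of $V\setminus S$ isolated in the complement. Setting $H:=G-\{x,y,z\}$, this gives $\gamma_{tr}(G-E_1)=3+\gamma_{tr}(H)$.

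The heart of the proof is the complementary estimate $\gamma_{tr}(G)\le 2+\gamma_{tr}(H)$. Starting from a minimum TRDS $D$ of $H$, I would construct a TRDS of $G$ of cardinality at most $|D|+2$ by adjoining at most two vertices chosen from $\{x,y,z\}$; the natural candidates are $D\cup\{x,z\}$, $D\cup\{x,y\}$, $D\cup\{y,z\}$, and in favourable configurations $D$ or $D\cup\{y\}$ themselves. Which candidate succeeds is governed by two local conditions: whether $x$ and $z$ each have a neighbour in $D$ (controlling total domination of $x$ and $z$ when $y\notin S$), and whether $y$ has a neighbour in $H\setminus D$ (so that $y$ is not isolated in $V\setminus S$ when $y\notin S$). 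The main obstacle will be the case analysis over how $N(x)$, $N(y)$, $N(z)$ intersect $D$ and $H\setminus D$, checking that in every configuration at least one of the small extensions above yields a valid TRDS of $G$. Once this is completed, combining the two estimates gives $\gamma_{tr}(G)<\gamma_{tr}(G-E_1)$, which together with the isolate-free property of $G-E_1$ certifies $b_{tr}(G)\le|E_1|\le\deg(x)+\deg(y)+\deg(z)-4$.
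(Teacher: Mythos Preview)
This statement is quoted in the paper as a known result of Rad, Hasni, Raczek and Volkmann; the paper does not supply its own proof, so there is nothing here to compare your argument against line by line.

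On the substance of your proposal: the overall architecture is exactly the natural one (and is essentially how the original source proceeds): cut the boundary edges of the path $x,y,z$, observe that the resulting $P_3$ (or $K_3$) component forces all three of $x,y,z$ into any TRDS of $G-E_1$, and then argue that in $G$ itself one can always get away with at most two of them on top of a minimum TRDS $D$ of $H=G-\{x,y,z\}$. Your counting of $|E_1|$ and your verification that $G-E_1$ is isolate-free are clean and correct.

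The one place where your write-up is still a genuine sketch is the inequality $\gamma_{tr}(G)\le \gamma_{tr}(H)+2$. Be aware that for a \emph{given} minimum TRDS $D$ of $H$, it is not automatic that one of the specific candidates $D\cup\{x,y\}$, $D\cup\{y,z\}$, $D\cup\{x,z\}$ works: the restrained condition for the single vertex of $\{x,y,z\}$ left outside $S$ can fail if all of its neighbours happen to land in $D\cup\{x,y,z\}$. You will therefore also need the one-vertex extensions $D\cup\{x\}$ and $D\cup\{z\}$ (not just $D\cup\{y\}$) in your list of candidates, and the case split has to track simultaneously whether each of $x,z$ has a neighbour in $D$ and whether it has a neighbour in $H\setminus D$. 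With those additional candidates the case analysis does close, but it is the entire content of the proof and should be written out rather than asserted.
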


\begin{them}[Rad, Hasni, Raczek and Volkmann~\cite{rhrv}]\label{help22a}
 $b_{tr}(K_n) = n-1$ for $n \geq 4$. 
\end{them}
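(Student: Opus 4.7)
The proof has two parts: first compute $\gamma_{tr}(K_n)=2$ for $n\geq 4$ (any pair of adjacent vertices is a TRDS, since every other vertex is $K_n$-adjacent to both and the remaining $n-2\geq 2$ vertices induce a clique), then bound $b_{tr}(K_n)$ above and below by $n-1$.

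For the upper bound, I would exhibit an explicit edge set of size $n-1$. Fix distinct vertices $u,v,w$ and let $E_1$ consist of the $n-2$ edges incident to $v$ other than $uv$, together with the edge $uw$. In the resulting graph $G=K_n-E_1$, the vertex $v$ is pendant with unique neighbor $u$, the vertex $w$ is non-adjacent to both $u$ and $v$, and no vertex is isolated since $n\geq 4$. I would argue that no two-element set $S=\{a,b\}$ can be a TRDS of $G$: since the only $G$-neighbor of $v$ is $u$, the requirement that $v$ be adjacent to $V-S$ forces $v\in S$, and then the requirement that $v$ be adjacent to $S$ forces $u\in S$; but then $w\notin S$ is adjacent to neither $u$ nor $v$. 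Hence $\gamma_{tr}(G)\geq 3>\gamma_{tr}(K_n)$, so $b_{tr}(K_n)\leq n-1$.

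For the lower bound, I would show that if $E_1\subseteq E(K_n)$ has $|E_1|\leq n-2$ and $G=K_n-E_1$ has no isolated vertex, then $\gamma_{tr}(G)=2$. Write $\overline{G}$ for the complement of $G$ in $K_n$; since $|E(\overline{G})|\leq n-2$, the average $\overline{G}$-degree is strictly less than $2$. I would split on whether $G$ has a universal vertex. If some vertex $u$ is $G$-universal, then $\{u,v\}$ automatically dominates for any $v\neq u$, and I would choose $v$ to be the (unique, by edge count) $G$-pendant if one exists, otherwise any vertex outside the small set of ``second neighbors'' of $G$-degree-$2$ vertices; one then checks directly that $\{u,v\}$ is a TRDS. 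If no universal vertex exists, every $\overline{G}$-degree is $\geq 1$; pick $u$ of minimum $\overline{G}$-degree, necessarily $=1$, missing some $z$. A counting argument shows $\deg_G(z)\geq 2$ (otherwise $z$'s unique $G$-neighbor would be forced to be $G$-universal, contradicting the case), so I may pick $v\in N_G(z)$ from an analogously forced-small pool and verify $\{u,v\}$ is a TRDS.

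The main obstacle is verifying, in each subcase, that the forbidden set of $v$-choices--those for which some vertex $w$ would have $N_G(w)\subseteq\{u,v\}$--is strictly smaller than the candidate pool, so that a legal $v$ always exists. The key tool is the double-counting inequality $(n-3)d_2\leq 2(n-2)$, where $d_2$ denotes the number of $G$-degree-$2$ vertices; this gives $d_2\leq 3$ for $n\geq 5$, leaving abundant room against $|V|-O(1)$ candidates. The tight small case $n=4$ would be handled separately by direct enumeration of the few admissible $E_1$.
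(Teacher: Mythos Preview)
The paper does not prove this statement at all: it is Theorem~D in Section~2 (``Known results''), quoted from Rad, Hasni, Raczek and Volkmann~\cite{rhrv} and used later only as a black box in the proof of Theorem~4(iv). There is therefore no in-paper argument to compare your attempt against; a fair comparison would require the original proof in~\cite{rhrv}.

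On its own merits: your upper bound construction is correct and clean. The lower bound has the right architecture, but Case~2 is under-argued. Two specific points. First, the parenthetical ``otherwise $z$'s unique $G$-neighbor would be forced to be $G$-universal'' is not literally what the counting gives you; what actually follows from $\deg_{\overline G}(z)=n-2$, $\deg_{\overline G}(u)=1$ and $\sum\deg_{\overline G}\le 2(n-2)$ is that \emph{some} vertex (not necessarily $z$'s neighbor) is universal, which still contradicts the case hypothesis---so the conclusion $\deg_G(z)\ge 2$ is right, but the stated reason is not. Second, in Case~2 the candidate pool is $N_G(z)$, which can have size as small as $2$, so your global bound $d_2\le 3$ does not by itself guarantee a legal $v$; you need the sharper observation that any ``bad'' $w$ (with $N_G(w)=\{u,v\}$) lies outside $N_G(z)\cup\{u,z\}$, and then a degree-sum count in $\overline G$ tailored to that partition. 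This refinement does go through for $n\ge 5$, but it is not the inequality you wrote down.
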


\begin{them}[Rad and Volkmann~\cite{rv}]\label{help2a}
If $G$ is a graph, and $xyz$ a path of length $2$ in $G$, then
$b_R(G)  \leq deg(x) + deg(y) + deg(z) - 3$.
\end{them}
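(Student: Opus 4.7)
My plan is to exhibit an edge set $E_1\subseteq E(G)$ with $|E_1|\le \deg(x)+\deg(y)+\deg(z)-3$ whose removal strictly increases $\gamma_R$. The natural candidate is
\[
E_1 \;=\; \{\,e\in E(G): e \text{ is incident with } \{x,y,z\}\,\}\setminus\{xy\},
\]
i.e., every edge touching the path, except the edge $xy$ itself. First I would verify the cardinality by writing $k=|E(G[\{x,y,z\}])|\in\{2,3\}$ (with $k=3$ exactly when $xz\in E(G)$) and using the identity $\deg(x)+\deg(y)+\deg(z)=|E_1|+1+k$, which comes from double-counting internal edges. This yields $|E_1|\le \deg(x)+\deg(y)+\deg(z)-3$, with equality when $xz\notin E(G)$.

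Next I would analyze the structure of $G-E_1$ near $\{x,y,z\}$: the vertex $z$ is isolated, the pair $\{x,y\}$ forms a $K_2$ component, and both components are disjoint from the rest of the graph. Since any RDF has weight $\ge 1$ on an isolated vertex and $\ge 2$ on a component isomorphic to $K_2$, every RDF $f$ of $G-E_1$ must satisfy $f(x)+f(y)+f(z)\ge 3$.

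To finish, I would take a minimum RDF $f$ of $G-E_1$ and define $g$ on $V(G)$ by $g(x)=g(z)=0$, $g(y)=2$, and $g(v)=f(v)$ for every other $v$. The key verification is that $g$ is an RDF of $G$: the vertices $x$ and $z$ are Roman-dominated by $y$ via the edges $xy,yz\in E(G)$, and for any $v\notin\{x,y,z\}$ with $g(v)=f(v)=0$, the function $f$ provided a neighbor $w$ in $G-E_1$ with $f(w)=2$. Since $E_1$ contains every edge between $\{x,y,z\}$ and its complement, this $w$ must lie in $V(G)\setminus\{x,y,z\}$, so $g(w)=f(w)=2$ and $vw\in E(G)$. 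The weight computation
\[
w(g)\;=\;w(f)-\bigl(f(x)+f(y)+f(z)\bigr)+2\;\le\;w(f)-1
\]
then gives $\gamma_R(G)\le w(g)\le \gamma_R(G-E_1)-1$, so $b_R(G)\le |E_1|\le \deg(x)+\deg(y)+\deg(z)-3$.

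The main obstacle is really just the last verification: one must be certain that no vertex of $V(G)\setminus\{x,y,z\}$ was relying on a vertex of $\{x,y,z\}$ as its label-$2$ dominator in $G-E_1$. This is forced by the very definition of $E_1$, which severs every edge from $\{x,y,z\}$ to its complement; once that is observed the remainder of the argument is just edge-counting and the easy lower bound $f(x)+f(y)+f(z)\ge 3$.
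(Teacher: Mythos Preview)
The paper does not supply its own proof of this statement; Theorem~E is quoted from Rad and Volkmann as a known result in Section~2 and is only \emph{applied} later, in the proof of Theorem~7. So there is no in-paper argument to compare against.

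That said, your argument is correct and is essentially the standard one. The edge set $E_1$ is well chosen; your count $|E_1|=\deg(x)+\deg(y)+\deg(z)-k-1$ with $k=|E(G[\{x,y,z\}])|\in\{2,3\}$ is accurate, and in $G-E_1$ the vertex $z$ is isolated while $\{x,y\}$ is a $K_2$-component detached from the rest, forcing $f(x)+f(y)+f(z)\ge 3$ for every RDF $f$ of $G-E_1$. Your relabelling $g$ (set $g(y)=2$, $g(x)=g(z)=0$, keep $f$ elsewhere) is an RDF of $G$ because $y$ dominates $x$ and $z$, and any $v\notin\{x,y,z\}$ with $f(v)=0$ must have had its label-$2$ neighbour outside $\{x,y,z\}$ (all cross edges lie in $E_1$); the weight drop $w(g)\le w(f)-1$ then gives $\gamma_R(G-E_1)>\gamma_R(G)$ and hence $b_R(G)\le |E_1|\le \deg(x)+\deg(y)+\deg(z)-3$.
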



\begin{them} \label{help66}
Let $G$ be a connected graph of order $n$ and size $m$.
\begin{itemize}
\item[(i)]  {\rm (Sanchis~\cite{Sanchist})} 
If $\gamma_t(G) = \gamma_t  \geq 5$, then 
$m \leq \binom{n-\gamma_t +1}{2} + \left\lfloor \frac{\gamma_t}{2}\right\rfloor$. 
\item[(ii)]   {\rm (Sanchis~\cite{Sanchis})} 
 If  $\gamma_w (G) = \gamma_w \geq 3$, then 
$m \leq \binom{n-\gamma_w +1}{2}$. 
\item[(iii)]  {\rm (Sanchis~\cite{Sanchisc})} 
 If  $\gamma_c (G) = \gamma_c \geq 3$, then 
$m \leq \binom{n-\gamma_c +1}{2} +  \gamma_c -1$. 
\end{itemize}
 \end{them}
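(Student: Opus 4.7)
All three inequalities share the shape $m\leq \binom{n-\gamma+1}{2}+c$, with correction terms $c\in\{\lfloor\gamma_t/2\rfloor,\,0,\,\gamma_c-1\}$ that match natural lower bounds on the number of edges inside $\langle D\rangle$ forced by each type of domination. My first step in each part is therefore to fix a minimum dominating set $D$ of the appropriate type with $|D|=\gamma$ and decompose
\[
m \;=\; e(V-D) \;+\; e(V-D,\,D) \;+\; e(D),
\]
where $e(A)$ counts edges inside $A$ and $e(A,B)$ counts edges between $A$ and $B$. The crude estimate $e(V-D)\leq\binom{n-\gamma}{2}$ accounts for most of the target, but this decomposition alone is too loose: one already has $e(V-D,D)\geq n-\gamma$ by domination, so to reach the target the real proof must exploit a trade-off between density inside $\langle V-D\rangle$ and edges incident to $D$.

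To control this trade-off I would choose $D$ extremal not only in cardinality but under a secondary criterion — say, minimum $e(D)$ among all minimum $\gamma$-sets, or maximum number of private neighbors — and then run an exchange argument: if the counts above exceed the target, a local swap $(v,v')$ with $v\in D$, $v'\in V-D$ should produce a smaller dominating set of the required type, contradicting the minimality of $|D|$. For part (iii) the correction $\gamma_c-1$ matches a spanning tree of the connected $\langle D\rangle$; for part (i), $\lfloor\gamma_t/2\rfloor$ matches a matching of that size in $\langle D\rangle$ (available since a TDS has no isolated vertex inside $D$); for part (ii), no correction is needed because a minimum WCDS can be chosen with $\langle D\rangle$ potentially edgeless. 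The three proofs share this outline, but the swap step must be adapted to preserve the specific connectivity/totality condition, and the hypotheses $\gamma_t\geq 5$ and $\gamma_w,\gamma_c\geq 3$ rule out small cases where the swap fails or the extremal graph degenerates.

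The main obstacle is the exchange step itself: showing rigorously that excess edges incident to $D$ yield a smaller $D$ of the right type. Each variant requires a bespoke swap — the connected case must keep $\langle D\rangle$ connected through the modification; the total case must avoid creating isolated vertices in $\langle D\rangle$; the weakly connected case must preserve connectivity of the spanning witness $G-E(V-D)$. Part (ii) is the most delicate, which is consistent with its zero correction: with no slack for edges inside $D$, any stray $\langle D\rangle$-edge must be charged against an edge saving elsewhere, and this charging must be executed carefully. Handling the three variants uniformly, while extracting the precise constants $\lfloor\gamma_t/2\rfloor$, $0$, and $\gamma_c-1$, is the bulk of the routine-but-nontrivial work.
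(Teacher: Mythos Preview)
The paper does not prove this theorem. It appears in Section~\ref{prelimi} (``Known results'') as a quotation of three separate results of Sanchis, each with its own citation, and is invoked later as a black box. There is therefore no in-paper argument to compare your sketch against.

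As to the sketch itself: the decomposition $m=e(V\setminus D)+e(V\setminus D,D)+e(D)$ together with an extremal choice of $D$ and an exchange argument is indeed the architecture of the original Sanchis proofs, and your identification of the correction terms with a matching, the empty graph, and a spanning tree on $D$ is on target. But you have not carried out the exchange step, and you say so yourself: the claim that excess edges force a strictly smaller dominating set \emph{of the required type} is precisely the substance of each paper, and each variant needs a genuinely different construction (the connected case in particular requires care to keep $\langle D\rangle$ connected through the swap). What you have written is a correct roadmap, not a proof; the ``routine-but-nontrivial work'' you defer is in fact the entire argument.
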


\begin{them} \label{dghhmThm} {\rm (Dunbar et al.~\cite{dghhm})}
If $G$ is a connected graph with $n \geq 2$ vertices then $\gamma_w (G) \leq n/2$. 
\end{them}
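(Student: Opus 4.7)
The plan is to exhibit a weakly connected dominating set of size at most $n/2$ by exploiting the bipartite structure of a spanning tree. Because $G$ is connected with $n \geq 2$ vertices, I would choose any spanning tree $T$ of $G$. Since $T$ is a tree, it is bipartite; write its two colour classes as $A$ and $B$ with $|A| \leq |B|$, which forces $|A| \leq \lfloor n/2 \rfloor \leq n/2$.

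I then claim that $A$ is a weakly connected dominating set of $G$, which will give $\gamma_w(G) \leq |A| \leq n/2$. For domination, every vertex $v \in B$ has at least one neighbour in $T$ (a spanning tree of a connected graph on $n \geq 2$ vertices has minimum degree at least one), and by the $2$-colouring of $T$ that neighbour lies in $A$. For weak connectedness, recall that the relevant subgraph is obtained from $G$ by removing only edges joining two vertices of $V(G)\setminus A = B$; every edge of $T$ runs between $A$ and $B$, so no tree edge is removed. The resulting subgraph therefore contains the connected spanning tree $T$ and is itself connected.

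I do not foresee a genuine obstacle in executing this plan. The only conceptual point worth articulating is that the edge-deletion defining weak connectedness never touches an edge incident to $A$, so choosing $A$ to meet every tree edge at least once, which the bipartition of $T$ guarantees automatically, is enough to preserve a spanning connected subgraph. One could alternatively argue via any spanning bipartite subgraph, but the spanning-tree version is the cleanest and avoids a separate domination check, since connectedness of the witnessing subgraph on $V(G)$ already precludes isolated vertices in $B$.
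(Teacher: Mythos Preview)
Your argument is correct. The paper itself does not prove this statement---it appears in the ``Known results'' section with a citation to Dunbar et al.---so there is no in-paper proof to compare against; your spanning-tree bipartition argument is essentially the standard one given in that original reference.
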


The average  degree $ad(G)$ of a graph $G$ is defined as $ad(G) = 2\|G\|/|G|$. 

\begin{them} \label{hra} {\rm (Hartnell and Rall~\cite{hr})}
For any connected nontrivial graph $G$, there exists a pair of vertices, 
say $u$ and $v$, that are either adjacent or at distance $2$ 
from each other, with the property that $deg(u) + deg(v) \leq 2ad(G)$.
\end{them}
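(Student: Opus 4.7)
The plan is to argue by contradiction: suppose every pair of vertices $u,v$ with $d(u,v)\le 2$ satisfies $\deg(u)+\deg(v)>2\,ad(G)$. Let $v_0$ be a vertex of minimum degree, so $\deg(v_0)=\delta(G)\le ad(G)$. The hypothesis then forces $\deg(u)>2\,ad(G)-\deg(v_0)\ge ad(G)$ for every $u\in N(v_0)\cup N_2(v_0)$. Setting $B=\{v_0\}\cup N(v_0)\cup N_2(v_0)$, I would split the identity $\sum_{v}\deg(v)=|V(G)|\cdot ad(G)$ into a contribution from $B$ (using the pointwise lower bound just derived) and a contribution from $V(G)\setminus B$ (using $\deg\ge\delta(G)=\deg(v_0)$). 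Simplification reduces to
\[
(ad(G)-\deg(v_0))\bigl(|V(G)|-2(|B|-1)\bigr)>0.
\]

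If $\deg(v_0)=ad(G)$, then $G$ must be regular, every pair of adjacent vertices has degree sum exactly $2\,ad(G)$, and the strict assumption collapses to $0>0$, a contradiction. If $\deg(v_0)<ad(G)$, the displayed inequality only yields $|B|<|V(G)|/2+1$, which already settles the theorem whenever $G$ has diameter at most $2$ (since then $|B|=|V(G)|$), but is not yet a contradiction in general.

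To close the remaining "small ball" case, I would exploit the global structural consequences of the hypothesis. Setting $L=\{v:\deg(v)\le ad(G)\}$, the hypothesis forces $L$ to be a $2$-packing (pairwise at distance $\ge 3$), and each vertex of $V(G)\setminus L$ is adjacent to at most one vertex of $L$; hence $\sum_{v\in L}\deg(v)=e(L,V(G)\setminus L)\le |V(G)|-|L|$. Combining this with the pointwise lower bounds $\deg(u)>2\,ad(G)-\deg(v)$ valid for every $v\in L$ and every $u\in N(v)\cup N_2(v)$, I would perform a double count of the contribution of edges incident with $L$ to the total degree sum $|V(G)|\cdot ad(G)$, producing the required contradiction. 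The main obstacle is precisely this last step: the $2$-neighborhoods $B_v$ for different $v\in L$ may overlap (two vertices of $L$ can be at distance exactly $3$ or $4$), and handling this overlap carefully—while still using the tight edge count above—is the delicate technical point on which the proof hinges.
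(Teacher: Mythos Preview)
The paper does not prove this statement at all: Theorem~\ref{hra} is listed among the preliminary ``Known results'' with a citation to Hartnell and Rall and no argument, so there is no in-paper proof to compare your attempt against.

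Judged on its own, your proposal is not a proof but an outline with an acknowledged hole. The first phase (localising around a minimum-degree vertex $v_0$) is computed correctly, and you are right that the resulting inequality $(ad(G)-\deg(v_0))\bigl(|V(G)|-2(|B|-1)\bigr)>0$ only bounds the size of the $2$-ball around $v_0$ and gives no contradiction when the diameter exceeds~$2$. The second phase sets up the right structural facts --- $L=\{v:\deg(v)\le ad(G)\}$ is a $2$-packing, every vertex outside $L$ has at most one neighbour in $L$, hence $\sum_{v\in L}\deg(v)\le |V(G)|-|L|$ --- but then stops precisely where the work is: you propose a double count over the balls $B_v=N(v)\cup N_2(v)$ and immediately concede that these balls overlap (two vertices of $L$ at distance $3$ or $4$ share second neighbours, and a vertex in $N_2(v)$ can even lie in $N(v')$). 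Nothing in the write-up explains how to control that overlap or why the inequalities still close despite it. As it stands, the decisive step is missing, and what remains is a plausible strategy rather than a proof; to complete it you would need either an explicit inclusion--exclusion bound on $\bigl|\bigcup_{v\in L}B_v\bigr|$ sharp enough to beat the degree-sum identity, or a different averaging argument that avoids the balls altogether.
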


\begin{them}[Samodivkin ~\cite{samajc}]\label{SGZ}
Let $G$ be a connected graph embeddable on a surface $\mathbb{M}$ whose Euler
 characteristic $\chi$ is as large as possible and let $g(G) = g < \infty$.
 Then:
 \[
  ad(G) \leq \frac{2g}{g-2}(1 - \frac{\chi}{|G|}).
 \]
\end{them}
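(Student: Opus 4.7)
The plan is to carry out the standard Euler-formula argument, relying on the fact that the maximum-Euler-characteristic embedding is $2$-cell so that we may apply Euler's formula verbatim.

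First, I would verify that $G$ has a $2$-cell embedding on $\mathbb{M}$. Since $\chi(\mathbb{M})$ is as large as possible among surfaces on which $G$ embeds, $\mathbb{M}$ is either $\mathbb{S}_{g(G)}$ or $\mathbb{N}_{\overline{g}(G)}$. In the first case, every embedding on $\mathbb{S}_{g(G)}$ is $2$-cell by fact (a) in the introduction; in the second, $G$ admits a $2$-cell embedding on $\mathbb{N}_{\overline{g}(G)}$ by fact (b). Fix such a $2$-cell embedding; Euler's formula then yields
\[
|G| - \|G\| + f(G) = \chi.
\]

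Next, I would use the girth bound on face lengths. Since $G$ has girth $g$, the boundary walk of every face of the embedding has length at least $g$. Counting edge-face incidences (each edge lies on the boundary of at most two faces) gives $g\cdot f(G) \leq 2\|G\|$, hence $f(G) \leq 2\|G\|/g$. Substituting into Euler's formula,
\[
\chi = |G| - \|G\| + f(G) \leq |G| - \|G\| + \frac{2\|G\|}{g} = |G| - \frac{g-2}{g}\|G\|,
\]
which rearranges to $\|G\| \leq \frac{g}{g-2}\bigl(|G|-\chi\bigr)$. Dividing by $|G|/2$ gives
\[
ad(G) = \frac{2\|G\|}{|G|} \leq \frac{2g}{g-2}\left(1 - \frac{\chi}{|G|}\right),
\]
which is the desired inequality.

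There is essentially no obstacle beyond the $2$-cell justification: the only nontrivial point is recognising that the maximum-$\chi$ hypothesis is precisely what is needed to invoke facts (a) and (b) and thus Euler's formula. Everything after that is the routine girth-Euler computation, and the condition $g < \infty$ (so $g\geq 3$) ensures that the factor $g/(g-2)$ is well defined and positive.
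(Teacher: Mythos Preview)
This statement appears in the paper's Section~\ref{prelimi} (``Known results'') and is quoted from \cite{samajc} without proof, so there is no in-paper argument to compare against. Your proof is correct and is precisely the standard girth--Euler computation one expects in the original reference: the hypothesis that $\chi(\mathbb{M})$ is maximal forces $\mathbb{M}\in\{\mathbb{S}_{g(G)},\mathbb{N}_{\overline{g}(G)}\}$, whence facts (a) and (b) from the introduction supply a $2$-cell embedding; Euler's formula together with $g\cdot f(G)\le 2\|G\|$ then yields the bound.
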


Let 
\[ 
  h_1(x) =  
\begin{cases}
2x + 13 &\text{for $0 \leq x \leq 3$}\\
4x + 7 &\text {for $x \geq 3$}
\end{cases}
, \  \ 
h_2(x) = 
\begin{cases}
8 &\text{for $x=0$}\\
4x + 5 &\text {for $x \geq 1$}
\end{cases}
,
\]
\[
  k_1(x) =  
\begin{cases}
2x + 11 &\text{for $1 \leq x \leq 2$}\\
2x + 9 &\text {for $3 \leq x \leq 5$}\\
2x + 7 &\text {for $x \geq 6$}.
\end{cases}
\ \ and \ \ 
k_2(x) = 
\begin{cases}
8 &\text{for $x=1$}\\
2x+ 5 &\text {for $x \geq 2$}.
\end{cases}
\]
\begin{them}[Ivan\v{c}o \cite{i}] \label{help33} 
If $G$ is a connected graph of orientable genus $g$ and minimum 
degree at least $3$, then $G$ contains an edge $e=xy$ such that 
$deg(x)+deg(y) \leq h_1(g)$.
Furthermore, if $G$ does not contain $3$-cycles, then 
 $G$ contains an edge $e=xy$ such that 
$deg(x)+deg(y) \leq h_2(g)$.
\end{them}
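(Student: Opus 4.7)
The plan is to argue by contradiction via a discharging argument on a $2$-cell embedding of $G$ on the orientable surface $\mathbb{S}_g$, which exists because every embedding on $\mathbb{S}_{g(G)}$ is $2$-cell by the remark cited earlier. Suppose every edge $xy$ of $G$ satisfies $\deg(x)+\deg(y) \geq h_1(g)+1$. Writing $V_i$ for the set of vertices of degree $i$, the hypothesis $\delta(G)\geq 3$ gives $V_i = \emptyset$ for $i \leq 2$, and the contradiction hypothesis forces every neighbor of $v \in V_i$ to have degree at least $h_1(g)+1-i$; in particular, no two vertices of sufficiently small degree can be adjacent. Combining Euler's formula $|V|-|E|+|F|=\chi(\mathbb{S}_g)=2-2g$ with $\sum_F |F| \geq 3|F|$ and $\sum_v\deg(v)=2|E|$ yields the density bound $|E|\leq 3(|V|+2g-2)$.

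I would then assign the initial charges $c(v)=\deg(v)-6$ to each vertex and $c(F)=2|F|-6$ to each face, so that the total charge equals $-6\chi(\mathbb{S}_g)=12(g-1)$. The elements of negative initial charge are the vertices of degree $3,4,5$ and the triangular faces, while high-degree vertices and long faces carry surplus. The discharging rules must be chosen so that the charge pushed from high-degree neighbors (which, under the contradiction hypothesis, surround every small-degree vertex) together with the charge received from incident large faces exactly absorbs these local deficits. Verifying that every element ends with non-negative final charge then contradicts the value $12(g-1)$ in the regime where $h_1(g)=2g+13$; for $g\geq 3$ the alternative threshold $4g+7$ is dictated by the extremal configuration in which small-degree vertices share high-degree neighbors arranged around short faces.

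The main obstacle will be calibrating the discharging rules so that the charge a vertex $u$ sends to a small-degree neighbor $v$ is precisely what the arithmetic $\deg(u)\geq h_1(g)+1-\deg(v)$ guarantees, and no more; the piecewise form of $h_1$ (with the break at $g=3$) reflects exactly which of these local estimates becomes tight. For the triangle-free statement all faces have length at least $4$, so it is natural to replace the charges by $c(v)=\deg(v)-4$ and $c(F)=|F|-4$: faces are then automatically non-negative, the total charge drops to $-4\chi(\mathbb{S}_g)=8(g-1)$, and the density estimate improves to $|E|\leq 2(|V|+2g-2)$. The same structural scheme, now discharging only between vertices, then produces the sharper threshold $h_2(g)$.
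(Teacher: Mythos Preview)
The paper does not contain a proof of this statement at all: Theorem~J is listed in Section~2 (``Known results'') and is simply quoted from Ivan\v{c}o's paper~\cite{i} as a tool to be applied later. There is therefore no in-paper argument to compare your proposal against.

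As a separate remark on the proposal itself, your discharging outline is the natural strategy and is in the spirit of Ivan\v{c}o's original proof, but as written it does not close for $g\geq 1$. With charges $c(v)=\deg(v)-6$ and $c(F)=2|F|-6$ the total charge is $-6\chi(\mathbb{S}_g)=12(g-1)$, which is \emph{non-negative} once $g\geq 1$; hence showing that every element ends with non-negative final charge yields no contradiction in that range. To make the scheme work for positive genus one must either (a) track the total surplus more carefully and show the final charges sum to strictly more than $12(g-1)$ under the contradiction hypothesis, or (b) argue as Ivan\v{c}o does by a direct counting/averaging estimate rather than a pure ``everything becomes non-negative'' discharging. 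Your sketch acknowledges a transition at $g=3$ but does not indicate how the inequality is actually forced for $g\geq 1$, so this is the place where a genuine argument is still missing.
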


\begin{them}[Jendrol$^{\prime}$ and Tuh\'arsky \cite{jt}] \label{help3} 
If $G$ is a connected graph of minimum 
degree at least $3$ on a nonorientable surface of genus $\overline{g} \geq 1$, 
 then $G$ contains an edge $e=xy$ such that 
$deg(x)+deg(y) \leq k_1(\overline{g})$.
Furthermore, if $G$ does not contain $3$-cycles, then 
$deg(x)+deg(y) \leq k_2(\overline{g})$.
\end{them}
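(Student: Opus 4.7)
The plan is to argue by contradiction using a discharging argument built on Euler's formula. Suppose $G$ is a connected graph of minimum degree at least $3$ that admits a $2$-cell embedding on $\mathbb{N}_{\overline{g}}$ (which exists by the fact recalled in the introduction), and assume every edge $xy$ of $G$ satisfies $\deg(x)+\deg(y) > k_1(\overline{g})$. I would then aim to contradict $|G|-\|G\|+f(G)=2-\overline{g}$. The triangle-free case is handled in parallel by the same scheme, exploiting that every face then has length at least $4$.

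Setting $n=|G|$, $m=\|G\|$, $f=f(G)$, I would assign initial charges $c(v)=\deg(v)-6$ to each vertex $v$ and $c(F)=2\ell(F)-6$ to each face $F$ of length $\ell(F)$. Using $\sum_v\deg(v)=2m$ and $\sum_F\ell(F)=2m$, Euler's formula gives a total initial charge of $6m-6n-6f=6(\overline{g}-2)$, which is the quantity the discharging must account for. In the triangle-free setting I would instead take $c(F)=\ell(F)-4$, yielding total charge $4(\overline{g}-2)$, since this is the more economical choice once triangles are banned.

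The discharging step consists of local rules that transfer charge from long faces and high-degree vertices toward the low-degree vertices that are the source of the negative initial charges. A typical rule sends a fixed fraction of each face's surplus to its incident vertices of degree $3$, $4$, or $5$, with refinements when the face is a triangle or a quadrilateral. Under the standing assumption that no edge $xy$ has $\deg(x)+\deg(y)\leq k_1(\overline{g})$, every neighbour of a low-degree vertex must have very large degree, which forces the incident faces to be long or to be surrounded by high-degree vertices; this is what makes the redistributed charges nonnegative. The three pieces of $k_1$, for $1\leq\overline{g}\leq 2$, $3\leq\overline{g}\leq 5$, and $\overline{g}\geq 6$, correspond to the successive thresholds at which this local argument just closes.

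The principal obstacle is the verification at vertices of degree $3$, which begin with charge $-3$. The amount such a vertex can extract from each incident face depends on the combinatorial type of the face and on the degrees of the nearby vertices, and ensuring a uniform nonnegative lower bound on the final charge of each vertex and each face requires a careful enumeration of the local configurations around hypothetical light edges. This case analysis is where most of the work lies, and the piecewise form of $k_1$ (and $k_2$ in the triangle-free version) essentially records the genus thresholds at which the enumeration closes in a different way.
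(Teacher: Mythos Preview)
This statement is not proved in the paper at all: it appears in Section~2 (``Known results'') as a citation of Jendrol\kern.05em' and Tuh\'arsky, and the paper simply invokes it as a black box in the proofs of Theorems~3 and~4. There is therefore no proof in the paper to compare your proposal against.

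That said, your outline is the right kind of argument for a Kotzig-type weight theorem on surfaces: the original proof is indeed a discharging argument built on Euler's formula, and your choice of initial charges $c(v)=\deg(v)-6$, $c(F)=2\ell(F)-6$ with total charge $6(\overline{g}-2)$ is standard. One small slip: in the triangle-free case you write that taking $c(F)=\ell(F)-4$ alone yields total charge $4(\overline{g}-2)$, but with the vertex charges unchanged the total is $4m-6n-4f$, which is not a multiple of the Euler characteristic; you would also need to reset the vertex charges to $\deg(v)-4$ to get $4(\overline{g}-2)$. Beyond that, your proposal is a plausible sketch, though as you acknowledge the real content is the case analysis around degree-$3$ vertices, and the precise piecewise thresholds in $k_1$ and $k_2$ come from that analysis rather than from anything visible at the level of your outline.
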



A path $uvw$ is a path of type $(i, j, k)$ if $deg(u)  \leq i$, $deg(v) \leq j$, and $deg(w) \leq k$.
\begin{them}[Borodin, Ivanova, Jensen, Kostochka and Yancey \cite{BIJKY}] \label{help444}
Let $G$ be a planar graph with $\delta(G) \geq 3$. If no $2$ adjacent vertices have degree $3$ 
then $G$ has a $3$-path of one of the following types:
\[
\begin{array}{llllllll}
(3,4,11) & (3,7,5) & (3,10,4) & (3,15,3)  & (4,4,9) & (6,4,8) & (7,4,7) & (6,5,6). 
\end{array}
\]
\end{them}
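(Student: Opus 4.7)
The plan is to use the discharging method, a standard technique for structural results on planar graphs. Suppose for contradiction that $G$ is a planar graph with $\delta(G) \geq 3$, no two adjacent vertices of degree $3$, and no $3$-path of any of the listed types $(3,4,11), (3,7,5), (3,10,4), (3,15,3), (4,4,9), (6,4,8), (7,4,7), (6,5,6)$. Fix a planar embedding. By Euler's formula $|V(G)| - |E(G)| + |F(G)| = 2$ together with the handshake identities $2|E(G)| = \sum_v d(v) = \sum_f |f|$, one obtains
\[
\sum_{v \in V(G)} (d(v) - 4) + \sum_{f \in F(G)} (|f| - 4) = -8.
\]
Assign the initial charge $\mu(v) = d(v) - 4$ to each vertex and $\mu(f) = |f| - 4$ to each face; the total charge is $-8$. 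The only elements with negative initial charge are the vertices of degree $3$ (charge $-1$) and the $3$-faces (charge $-1$), so the goal is to redistribute charge by local rules that preserve the total while making every individual charge nonnegative, producing a contradiction.

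Next, I would design discharging rules tailored to the forbidden list. Natural candidates are: \emph{(R1)} each face of length at least $5$ distributes its surplus equally among incident vertices; \emph{(R2)} each $3$-face transfers its $-1$ deficit to incident vertices of degree at least $4$ (in appropriate fractions depending on local structure); and \emph{(R3)} each vertex of degree $3$ pulls a fixed positive amount (such as $1/3$ or $1/2$) from each of its three neighbors, all of which have degree at least $4$ by hypothesis. The structural constraint is the lever: the forbidden type $(3,j,k)$ says that if a $3$-vertex $v$ is adjacent to a vertex $y$ of degree at most $j$, then every third neighbor of $y$ in a $3$-path through $y$ has degree at least $k+1$. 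For instance, a degree-$4$ neighbor $y$ of a $3$-vertex cannot have another neighbor of degree $\le 11$, so $y$ is surrounded by large-degree vertices that can afford to donate; the type $(3,15,3)$ plays the analogous role when $y$ has two $3$-neighbors.

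The final step is to verify that, after applying the rules, every vertex and every face has nonnegative charge, splitting the analysis by degree/length. For vertices of degree $3$ this is immediate from \emph{(R3)}. For faces of size at least $4$ it reduces to arithmetic. The substantive cases are medium-degree vertices (degrees $4$ through roughly $15$), where one must use the forbidden $3$-path types to rule out the worst local configurations around such a vertex.

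The main obstacle, as always with discharging proofs of this flavor, is calibrating the transfer amounts so that the rules simultaneously suffice to lift every $3$-vertex and $3$-face to nonnegativity and do not overdraw any donor. The particular constants $4,11;\,7,5;\,10,4;\,15,3;\,4,4,9;\,\ldots$ in the statement are almost certainly exactly what is needed to make a natural set of rules work, and conversely the proof should show that no smaller list suffices; so a large part of the work is a detailed case analysis for vertices of degrees $4,5,6,7$, using in each case the precise constraint that the listed types impose on their neighbors' degrees. I would expect this case analysis, rather than the setup or the discharging rules themselves, to be where essentially all the technical effort resides.
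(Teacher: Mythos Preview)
This theorem is quoted in the paper as a known result of Borodin, Ivanova, Jensen, Kostochka and Yancey and is not proved there; the paper merely cites \cite{BIJKY} and uses the statement as a black box in the proofs of Theorems \ref{last} and \ref{lastl}. So there is no ``paper's own proof'' to compare your proposal against.

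That said, your outline is the right genre of argument: the original proof in \cite{BIJKY} is indeed a discharging argument on a plane embedding, with initial charges coming from Euler's formula and the analysis split by vertex degree. What you have written, however, is only a plan, not a proof. The entire content of such a theorem lies in the exact discharging rules and the exhaustive verification that no vertex or face ends with negative charge, and you have not specified either. Your candidate rules \emph{(R1)}--\emph{(R3)} are too vague to be checked (``appropriate fractions depending on local structure''), and your reading of the forbidden types is inaccurate in places: for example, forbidding a $(3,4,11)$-path does \emph{not} say that a $4$-vertex adjacent to a $3$-vertex has all other neighbours of degree at least $12$; it only forbids a neighbour of degree at most $11$ along some $3$-path, and the correct constraint must be stated and used carefully. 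If you want to turn this into an actual proof you will need to write down precise transfer amounts and carry out the full case analysis; short of that, the appropriate thing here is simply to cite \cite{BIJKY}, as the paper does.
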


\begin{them} \label{zzz} 
Let $G$ be a connected graph with $n $ vertices and $q$ edges. 
\begin{itemize}
\item[(i)] {\rm (Ringel~\cite{Ringel}, \rm{Stahl}~\cite{St})} If $G$ is not a tree  then $G$ can be $2$-cell embedded on $\mathbb{N}_{q-n+1}$. 
\item[(ii)] {\rm (Jungerman~\cite{jun})}  If $G$ is a $4$-edge connected,  then $G$ can be $2$-cell embedded on 
$\mathbb{S}_{\left\lfloor \frac{q-n+1}{2}\right\rfloor}$.
\end{itemize}
\end{them}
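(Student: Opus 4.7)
Both parts are statements about maximum-genus embeddings, and I would approach them through the combinatorial theory of rotation systems and edge signatures. By Euler's formula, a $2$-cell embedding of $G$ on a surface of Euler characteristic $\chi$ has $f = \chi - n + q$ faces, so an embedding on $\mathbb{N}_{q-n+1}$ must have exactly one face, while one on $\mathbb{S}_{\lfloor(q-n+1)/2\rfloor}$ must have either one or two faces, depending on the parity of the Betti number $\beta := q - n + 1$. In both parts the goal therefore reduces to producing an embedding that minimizes the number of faces.

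For part (i), I would proceed by induction on $\beta$. Since $G$ is not a tree, $\beta \geq 1$. Choose a spanning tree $T$ and embed it in the sphere, which has a single face. Order the $\beta$ cotree edges $e_1,\dots,e_\beta$ and insert them one at a time; at each step, attach $e_i$ through a newly introduced crosscap, so that $e_i$ passes through the crosscap and fuses the two local sides of the unique face into one. This operation increases the non-orientable genus by exactly one and preserves the single-face property. After $\beta$ steps the resulting $2$-cell embedding lives on $\mathbb{N}_\beta = \mathbb{N}_{q-n+1}$.

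For part (ii), I would invoke Xuong's theorem: the maximum orientable genus of $G$ equals $\gamma_M(G) = (\beta(G) - \xi(G))/2$, where $\xi(G)$ is the minimum, over all spanning trees $T$ of $G$, of the number of components of $G - E(T)$ having an odd number of edges. The desired conclusion $\gamma_M(G) \geq \lfloor \beta/2 \rfloor$ is equivalent to $\xi(G) \leq 1$. The plan is to start from any spanning tree $T$ and show that if $G - E(T)$ has two odd components $A,B$, then $4$-edge-connectivity supplies enough independent edges between $A\cup B$ and the rest of $G$ to perform a sequence of elementary tree exchanges that merges $A$ and $B$ into a single even component of $G-E(T')$ for the new tree $T'$. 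Iterating drives the odd-component count down to at most one, which produces the required upper embedding.

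The main obstacle is the tree-modification argument in part (ii): one must verify that the exchanges can be chosen so as to destroy the two old odd components without creating new ones elsewhere, and that $4$-edge-connectivity (rather than merely $2$- or $3$-edge-connectivity) is the correct hypothesis to make this reduction go through. This is precisely where Jungerman's original proof invests most of its technical work. Part (i), by contrast, is comparatively soft, because non-orientable surfaces offer enough flexibility through crosscaps to absorb each cotree edge without splitting a face.
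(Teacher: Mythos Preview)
The paper does not prove this theorem at all: it is stated in Section~2 (``Known results'') with attributions to Ringel, Stahl, and Jungerman, and is used only as a black box to certify that the extremal examples in Section~3 actually embed on the claimed surfaces. So there is no ``paper's own proof'' to compare against; I can only assess your sketch on its own terms.

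Your approach is sound in outline. For part~(i), the one-face induction via crosscaps is the standard route to the non-orientable maximum-genus result and is essentially what Stahl's argument does; your sketch is correct, though in writing it up you would want to phrase the ``pass through a crosscap'' step precisely in terms of rotation systems with edge signatures. For part~(ii), invoking Xuong's formula $\gamma_M(G)=(\beta(G)-\xi(G))/2$ and reducing to $\xi(G)\le 1$ is a legitimate strategy, and you correctly identify the crux: engineering the tree exchanges so that merging two odd cotree components does not spawn new odd components elsewhere. Two remarks. First, Jungerman's original 1978 proof predates Xuong's 1979 characterization and is more direct, so your route is not the historical one but a valid alternative. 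Second, the reduction you describe is not merely ``technical'': the literature typically handles it via the Nebesk\'y or Kundu--Huang--Liu style splitter lemmas for $4$-edge-connected graphs, and getting the parity bookkeeping right is genuinely where the content lies. As written your proposal is an accurate roadmap, but it is a roadmap rather than a proof; to complete part~(ii) you would still need to supply or cite the lemma that guarantees the exchange sequence terminates with at most one odd component.
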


%

\section{Connected,  weakly connected and total domination}\label{twcd} 

\begin{theorem} \label{total}
Let $G$ be a connected graph of order $n$  and total
domination number $\gamma_t \geq 5$, 
which is $2$-cell embedded  on a surface $\mathbb{M}$. Then:
\begin{itemize}
\item[(i)] $n \geq \gamma_t + (1 + \sqrt{9+8(\left\lceil \gamma_t/2 \right\rceil - \chi(\mathbb{M}))})/2$;
\item[(ii)]  $\gamma_t \leq n - \sqrt{n + 2 - 2\chi(\mathbb{M})}$
 when  $\gamma_t$ is even and $\gamma_t \leq n - \sqrt{n + 3 - 2\chi(\mathbb{M})}$ when $\gamma_t$ is odd.
\end{itemize}
 \end{theorem}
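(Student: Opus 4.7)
The plan is to combine Sanchis's edge bound (Theorem \ref{help66}(i)) with Euler's formula to get a tight inequality linking $n$, $\gamma_t$, and $\chi(\mathbb{M})$, and then do the algebra to extract each of the two forms stated.

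First I would observe that since $G$ is $2$-cell embedded on $\mathbb{M}$, every embedded face is homeomorphic to a disc, so $f(G) \geq 1$. Together with Euler's formula $|G| - \|G\| + f(G) = \chi(\mathbb{M})$ this gives the lower edge bound
\[
 \|G\| \geq n + 1 - \chi(\mathbb{M}).
\]
Next, since $\gamma_t \geq 5$, Theorem~\ref{help66}(i) applies and yields the upper edge bound
\[
 \|G\| \leq \binom{n - \gamma_t + 1}{2} + \left\lfloor \frac{\gamma_t}{2}\right\rfloor.
\]
Chaining these two inequalities produces the master estimate
\[
 n + 1 - \chi(\mathbb{M}) \leq \binom{n - \gamma_t + 1}{2} + \left\lfloor \frac{\gamma_t}{2}\right\rfloor. \qquad (\star)
\]

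For part (i), I would rewrite the left side using the identity $\gamma_t - \lfloor \gamma_t/2\rfloor = \lceil \gamma_t/2\rceil$, setting $k = n - \gamma_t$, so $(\star)$ becomes
\[
 k(k-1) \geq 2\bigl(\lceil \gamma_t/2\rceil + 1 - \chi(\mathbb{M})\bigr).
\]
Solving this quadratic in $k$ (taking the positive root, since $k \geq 0$) gives
\[
 k \geq \frac{1 + \sqrt{9 + 8(\lceil \gamma_t/2\rceil - \chi(\mathbb{M}))}}{2},
\]
which is exactly the bound on $n = \gamma_t + k$ claimed in (i).

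For part (ii), I would go back to $(\star)$ and split on the parity of $\gamma_t$ so that $\lfloor \gamma_t/2\rfloor$ is eliminated cleanly. Writing $a = n - \gamma_t$, the two cases $\gamma_t$ even and $\gamma_t$ odd reduce, after clearing the denominator and cancelling the cross terms with $\gamma_t$, to
\[
 a^2 \geq n + 2 - 2\chi(\mathbb{M}) \qquad\text{and}\qquad a^2 \geq n + 3 - 2\chi(\mathbb{M}),
\]
respectively. Taking square roots and rearranging yields the stated bounds on $\gamma_t$. There is no real obstacle here; the only step requiring a touch of care is the bookkeeping with $\lfloor \gamma_t/2\rfloor$ versus $\lceil \gamma_t/2\rceil$ in passing between parts (i) and (ii), but the whole argument is just the algebraic consequence of Sanchis's bound and Euler's formula with $f \geq 1$.
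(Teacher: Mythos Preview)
Your proposal is correct and follows essentially the same route as the paper: combine Euler's formula (via $f(G)\geq 1$, giving $\|G\|\geq n+1-\chi(\mathbb{M})$) with Sanchis's edge bound from Theorem~\ref{help66}(i), and then solve the resulting quadratic for $n-\gamma_t$. The only cosmetic difference is that you substitute $k=a=n-\gamma_t$ before solving, whereas the paper writes out the quadratics directly in $n$ and $\gamma_t$; the algebra and the logic are the same.
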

\begin{proof}
Note that $n > \gamma_t \geq 5$ and $\chi(\mathbb{M}) \leq 2$. 
Since $f(G) \geq 1$, Euler's formula implies  $n - \|G\| + 1 \leq \chi(\mathbb{M})$. 
 By Theorem \ref{help66}(i) we have 
 $\|G\| \leq (n-\gamma_t +1)(n-\gamma_t)/2 + \left\lfloor \gamma_t/2 \right\rfloor$. 
 Hence 
 \[
 2\chi(\mathbb{M}) \geq 2n+2 - (n-\gamma_t +1)(n-\gamma_t) -2\left\lfloor \gamma_t/2 \right\rfloor,
 \]   
 or equivalently
\[
n^2 - (2\gamma_t+1)n + \gamma_t^2 - \gamma_t +2\left\lfloor \gamma_t/2 \right\rfloor - 2 + 2\chi(\mathbb{M}) \geq 0, \ \mbox{and} 
\]
\[
\gamma_t^2 - 2n\gamma_t + n^2 - n - \alpha + 2\chi(\mathbb{M}) \geq 0, 
\]
where $\alpha = 2$ when $\gamma_t$ is even and $\alpha = 3$ when $\gamma_t$ is odd. 
Solving these inequalities we respectively obtain 
 the bounds stated in (i) and (ii). 
\end{proof}

Next we show that the bounds in Theorem \ref{total} are tight. 
Let $n$,  $d$ and $t$ be integers such that 
$n = d + 4t + 1$,  $t \geq d \geq 6$ and $d \equiv 2 \textrm{ (mod 4)}$.  
Let us consider any graph $G$ which  has the following form: 
\begin{itemize}
\item[$(P_1)$] $G$ is obtained from $K_{n-d}\cup \frac{d}{2}K_2$ by adding edges between the clique and the graph
$\frac{d}{2}K_2$ in such a way that each vertex in the clique is adjacent to exactly one vertex
in $\frac{d}{2}K_2$ and each component of $\frac{d}{2}K_2$ has at least one vertex adjacent to a vertex in
the clique.
\end{itemize}

Clearly, $|G| = n$, $\gamma_t(G) = d$  and 
$\|G\| = \binom{n-d +1}{2} +  \frac{d}{2}$. 
 Hence $p = (\|G\| - |G| +1)/2 = 4t^2+t + (2-d)/4$ 
is an integer and $G$  can be $2$-cell embedded in $\mathbb{N}_{2p}$(by Theorem \ref{zzz}(i)).  
Now, let  in addition,   $\delta (G) \geq 5$.  Then  
since $G$ is clearly $4$-edge connected, Theorem \ref{zzz}(ii) implies that 
  $G$ can be embedded in $\mathbb{S}_p$. 
     It is easy to see that, in both cases,     the inequalities in Theorem \ref{total} become equalities.
    
  \begin{theorem} \label{upper}
Let $G$ be a connected graph of order $n$ which is $2$-cell embedded  on a surface $\mathbb{M}$. 
 If $\gamma_w (G) = \gamma_w \geq 4$ then 
\begin{equation} \label{nga}
n \geq \gamma_w + (1 + \sqrt{9+8\gamma_w-8\chi(\mathbb{M})})/2, \  \mbox{and}
\end{equation}
\begin{equation} \label{gan}
\gamma_w \leq n + (1 - \sqrt{8n + 9 - 8\chi(\mathbb{M})})/2.
\end{equation}
\end{theorem}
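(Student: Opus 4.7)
The plan is to mimic the proof of Theorem~\ref{total}, replacing the Sanchis edge bound for total domination with the analogous bound for weakly connected domination (Theorem~\ref{help66}(ii)). The hypothesis $\gamma_w \geq 4$ already guarantees $\gamma_w \geq 3$, so Theorem~\ref{help66}(ii) applies and yields $\|G\| \leq \binom{n-\gamma_w+1}{2}$. Since the embedding is $2$-cell, $f(G)\geq 1$, and Euler's formula gives $\|G\| \geq n + 1 - \chi(\mathbb{M})$.

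Combining these two inequalities and multiplying by $2$ produces the single quadratic relation
\[
n^2 - (2\gamma_w + 1)n + \gamma_w^2 - \gamma_w - 2 + 2\chi(\mathbb{M}) \geq 0.
\]

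To obtain (\ref{nga}) I would read this as a quadratic in $n$. Its discriminant simplifies to $8\gamma_w + 9 - 8\chi(\mathbb{M})$, and since $n>\gamma_w$ the value of $n$ must lie at or beyond the larger root, which is precisely (\ref{nga}).

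To obtain (\ref{gan}) I would instead read the same inequality as a quadratic in $\gamma_w$, whose discriminant works out to $8n + 9 - 8\chi(\mathbb{M})$. Here the correct root to retain is the \emph{smaller} one, because Theorem~\ref{dghhmThm} guarantees $\gamma_w \leq n/2$, which lies far below the larger root. There is no substantive obstacle: the argument is purely algebraic and is in fact slightly cleaner than the proof of Theorem~\ref{total}, since $\binom{n-\gamma_w+1}{2}$ has no extra $\lfloor\cdot\rfloor$ term and so the parity split (the parameter $\alpha\in\{2,3\}$ appearing in the total-domination proof) does not arise here.
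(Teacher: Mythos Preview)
Your proposal is correct and follows essentially the same route as the paper: combine Euler's formula (via $f(G)\ge 1$) with Sanchis' bound $\|G\|\le\binom{n-\gamma_w+1}{2}$ to obtain the single quadratic inequality, then solve it first in $n$ and then in $\gamma_w$, invoking Theorem~\ref{dghhmThm} ($\gamma_w\le n/2$) to select the smaller root for \eqref{gan}. Your remark that the absence of the floor term makes the argument cleaner than in Theorem~\ref{total} is exactly the point.
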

\begin{proof}
Since $f(G) \geq 1$, Euler's formula implies  $n - \|G\| + 1 \leq \chi(\mathbb{M})$. 
  Since $\|G\| \leq (n-\gamma_w +1)(n-\gamma_w)/2$ (by Theorem \ref{help66}(ii)), 
we have $2\chi(\mathbb{M}) \geq 2n - (n-\gamma_w +1)(n-\gamma_w) +2$, or equivalently
\[
n^2 - (2\gamma_w+1)n + \gamma_w^2 - \gamma_w - 2 + 2\chi(\mathbb{M}) \geq 0 \ \mbox{and} 
\]
\[
\gamma_w^2 - (2n+1)\gamma_w + n^2 - n - 2 + 2\chi(\mathbb{M}) \geq 0. 
\]
Solving these inequalities we respectively obtain 
 \eqref{nga} and \eqref{gan}, because 
$n \geq 2\gamma_w$ (by Theorem \ref{dghhmThm}). 
\end{proof}

The bounds in Theorem \ref{upper} are attainable. 
Let $n$,  $d$ and $t$ be integers such that 
$t \geq d \geq 4$ and  $n = d + 4t + i$,  
where $i = 1$ when $d$ is odd, and  $i = 2$ when $d$ is even. 
We consider an arbitrary  graph $G$ which  has the following form: 
\begin{itemize}
\item[$(P_2)$]  $G$ is the union of a clique of $n-d$ vertices, 
 and an independent set of size $d$, such
that each of the vertices in the $(n - d)$-clique is adjacent to exactly one of the
vertices in the independent set, and such that each of these $d$ vertices has at least
one vertex adjacent to it.
\end{itemize}

Obviously, $|G| = n$, $\gamma_w(G) = d$  and 
$\|G\| = \binom{n-d +1}{2}$. 
If $p = (\|G\| - |G| +1)/2$ then $p = 4t^2+t + (1-d)/2$ when $d$ is odd, 
 and $p = 4t^2 +3t +1 - d/2$ when $d$ is even.  
 Hence $p$ is an integer and $G$  can be $2$-cell embedded in $\mathbb{N}_{2p}$, 
  which follows by Theorem \ref{zzz}(i).   
Now, let  in addition,   $\delta (G) \geq 4$.  Then  
since $G$ is clearly $4$-edge connected, 
  $G$ can be embedded in $\mathbb{S}_p$(by Theorem \ref{zzz}(ii)). 
     It is easy to see that, in both cases, we have equalities in \eqref{nga} and  \eqref{gan}.

\begin{theorem} \label{connec}
Let $G$ be a connected graph of order $n$ which is $2$-cell embedded  on a surface $\mathbb{M}$. 
 If $ \gamma_c (G) = \gamma_c \geq 3$ then 
\begin{equation} \label{ngaa}
\gamma_c \leq n - (1 + \sqrt{17 - 8\chi(\mathbb{M})})/2.
\end{equation}
\end{theorem}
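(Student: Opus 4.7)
The plan is to follow exactly the template of Theorems \ref{total} and \ref{upper}: combine Euler's formula (exploiting $f(G) \geq 1$) with Sanchis' extremal bound on $\|G\|$ under a connected-domination constraint. First, Euler's formula gives $n - \|G\| + 1 \leq \chi(\mathbb{M})$, i.e., $\|G\| \geq n + 1 - \chi(\mathbb{M})$. Second, since $\gamma_c \geq 3$, Theorem \ref{help66}(iii) yields $\|G\| \leq \binom{n - \gamma_c + 1}{2} + \gamma_c - 1$. Chaining these two estimates produces
\[
2n + 2 - 2\chi(\mathbb{M}) \leq (n - \gamma_c + 1)(n - \gamma_c) + 2\gamma_c - 2.
\]

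Set $k = n - \gamma_c$. The linear $\gamma_c$-terms on the two sides conveniently cancel, leaving the clean quadratic inequality
\[
k^2 - k - 4 + 2\chi(\mathbb{M}) \geq 0,
\]
whose discriminant equals $17 - 8\chi(\mathbb{M})$. Solving and extracting the larger root gives $k \geq (1 + \sqrt{17 - 8\chi(\mathbb{M})})/2$, and rearranging reproduces \eqref{ngaa}.

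The only point requiring a moment's care is why $k$ must lie on the larger-root side of the parabola rather than on the smaller-root side $k \leq (1 - \sqrt{17 - 8\chi(\mathbb{M})})/2$. Since $\chi(\mathbb{M}) \leq 2$ the discriminant is at least $1$, so the smaller root is nonpositive; on the other hand, in any connected graph of order $n \geq 2$ a spanning tree has at least two leaves and its non-leaf vertices form a connected dominating set, so $\gamma_c \leq n - 2$ and hence $k \geq 2$. This selects the correct branch. I expect no substantive obstacle beyond this short observation; the argument is a direct analogue of the two preceding theorems, and the role of the small correction term $\gamma_c - 1$ appearing in Theorem \ref{help66}(iii) (which is absent in the weakly-connected case) is precisely to shift the constant $-2$ in the analogous quadratic to $-4$, producing the constant $17$ under the radical.
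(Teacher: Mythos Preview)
Your proof is correct and follows essentially the same approach as the paper: combine Euler's formula (via $f(G)\geq 1$) with Sanchis' bound from Theorem~\ref{help66}(iii), then solve the resulting quadratic. The only cosmetic difference is that you substitute $k=n-\gamma_c$ before solving (which makes the cancellation transparent), whereas the paper solves directly in $\gamma_c$; your branch-selection argument via $\gamma_c\leq n-2$ is slightly stronger than needed (the paper just uses $\gamma_c<n$), but either suffices.
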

\begin{proof}
 Note that $\gamma_c \geq 3$ implies $\gamma_c < n$. 
By Theorem \ref{help66}(iii)  we have $2\|G\| \leq (n-\gamma_c +1)(n-\gamma_c) + 2\gamma_c -2$. 
Hence by Euler's formula 
\[
2\chi(\mathbb{M}) \geq 2n - 2\|G\| +2 
\geq 2n +2 - (n-\gamma_c +1)(n-\gamma_c) -2\gamma_c +2,
\]
 or equivalently
 \[
\gamma_c^2 - (2n-1)\gamma_c + n^2 - n - 4 - \chi(\mathbb{M}) \geq 0
\]
Since $\gamma_c < n$, it immediately follows \eqref{ngaa}. 
\end{proof}

The bound in Theorem \ref{connec} is sharp. 
Let $n$,  $d$ and $t$ be integers such that 
$t \geq d \geq 4$ and  $n = d + t$.  
We consider any  graph $G$ which  has the following form: 
\begin{itemize}
\item[$(P_3)$] $G$ is the union of a clique of $n-d$ vertices, 
 and a path of $d$  vertices, where each
vertex in the clique is adjacent to exactly one of the endpoints of the path, 
and each endpoint has at least one clique vertex adjacent to it.
\end{itemize}

Clearly $|G| = n$, $\gamma_c(G) = d$, $\|G\| = \binom{n-d +1}{2} +  d -1$ and  $k = \|G\| - |G| +1 = \binom{t}{2}$.  
Now Theorem \ref{zzz}(i) implies that 
$G$  can be $2$-cell embedded in $\mathbb{N}_{k}$.  
Finally, it is easy  to check that 
$\gamma_c(G) = n - (1 + \sqrt{17 - 8\chi(\mathbb{N}_{k})})/2$.

In ending this section we note that
 in \cite{Samarxiv}, the present author proved analogous results 
 for the ordinary domination number.

\section{Bondage numbers and restricted edge connectivity}\label{rbrc} 
\begin{theorem} \label{main3} 
Let $G$ be a nontrivial connected graph of orientable genus $g$, non-orientable genus $\overline{g}$ 
and minimum degree at least $3$. Then 
\begin{itemize}
\item[(i)] $\max \{\lambda^{\prime}(G) ,b_r(G)\} \leq \min \{h_1 (g), k_1(\overline{g})\}-2$, and 
\item[(ii)] $\max \{\lambda^{\prime}(G) ,b_r(G)\} \leq \min \{h_2 (g), k_2(\overline{g})\}-2$, 
 provided $G$ does not contain $3$-cycles.
 \end{itemize}
 \end{theorem}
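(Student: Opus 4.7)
The plan is to reduce everything to the minimum edge-degree parameter $\xi(G)$ and then apply the local structure theorems of Ivan\v{c}o and Jendrol$^{\prime}$--Tuh\'arsky. First I would observe that the hypotheses of Theorem \ref{rcon} and Theorem \ref{help1} are both satisfied: the assumption $\delta(G)\geq 3$ forces $|V(G)|\geq 4$, rules out $G$ being a star (every star has vertices of degree $1$), and of course implies $\delta(G)\geq 2$. Consequently both $\lambda^{\prime}(G)\leq \xi(G)$ and $b_r(G)\leq \xi(G)$, so it suffices to bound $\xi(G)$ from above.

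Next, applying Theorem \ref{help33} to the orientable embedding yields an edge $e_1=xy$ with $\deg(x)+\deg(y)\leq h_1(g)$, whence $\xi(G)\leq \xi(e_1)=\deg(x)+\deg(y)-2\leq h_1(g)-2$. Applying Theorem \ref{help3} to the non-orientable embedding analogously gives $\xi(G)\leq k_1(\overline{g})-2$; here one only needs to note that $\overline{g}(G)\geq 1$ by the very definition of non-orientable genus, so the hypothesis of Theorem \ref{help3} is met. Taking the minimum of the two upper bounds and combining with Step 1 yields part (i):
\[
\max\{\lambda^{\prime}(G),b_r(G)\}\leq \xi(G)\leq \min\{h_1(g),k_1(\overline{g})\}-2.
\]

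Part (ii) follows by the same two-step argument, replacing the use of the first halves of Theorems \ref{help33} and \ref{help3} by their second halves, which apply precisely because $G$ is now assumed to contain no $3$-cycle, and which yield $\xi(G)\leq h_2(g)-2$ and $\xi(G)\leq k_2(\overline{g})-2$ respectively.

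I do not foresee a real obstacle: the result is essentially an assembly of four preceding theorems. The only point that requires a moment of care is verifying the side conditions of Theorems \ref{rcon} and \ref{help3} (that $G$ is neither a star nor has fewer than four vertices, and that the non-orientable genus is at least $1$); all three follow cleanly from $\delta(G)\geq 3$ and the definitions.
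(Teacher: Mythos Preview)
Your proposal is correct and follows essentially the same approach as the paper: reduce to $\xi(G)$ via Theorems \ref{rcon} and \ref{help1}, then bound $\xi(G)$ using the light-edge theorems of Ivan\v{c}o and Jendrol$^{\prime}$--Tuh\'arsky. Your write-up is in fact more careful than the paper's, which cites only Theorem \ref{help33} for (i) and only Theorem \ref{help3} for (ii) even though both are needed in each part; you also explicitly verify the side conditions (not a star, $|V(G)|\geq 4$, $\overline{g}\geq 1$) that the paper leaves implicit.
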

\begin{proof}
(i) By combining  Theorem \ref{rcon} and Theorem \ref{help1} with Theorem \ref{help33} 
we obtain the result.

(ii) Theorem \ref{rcon}, Theorem \ref{help1} and Theorem \ref{help3} together 
 immediately imply the required inequality.  
\end{proof}



\begin{theorem} \label{restrtot}
Let $G$ be a connected graph with $\delta(G) \geq 4$. 
\begin{itemize}
\item[(i)] Then  $b_{tr}(G) \leq \xi(G) + \Delta (G) - 2$. 
\item[(ii)] If $G$ is of orientable genus $g$, then 
                   $b_{tr}(G)  \leq h_1(g)+ \Delta (G) - 4$.
Furthermore, if $G$ does not contain $3$-cycles, then 
$b_{tr}(G) \leq h_2(g)+ \Delta (G) - 4$.
\item[(iii)] If $G$ is  of  nonorientable  genus $\overline{g} $, 
 then $b_{tr}(G) \leq k_1(\overline{g})+ \Delta (G) - 4$.
Furthermore, if $G$ does not contain $3$-cycles, then 
$b_{tr}(G) \leq k_2(\overline{g})+ \Delta (G) - 4$.
\item[(iv)] Then $b_{tr}(G) \leq 2ad(G) + \Delta(G) -4$.
\item[(v)] Let $G$ be embeddable on a surface $\mathbb{M}$ whose Euler
 characteristic $\chi$ is as large as possible and let $g(G) = g$.
 Then:
 \[
  b_{tr}(G) \leq \frac{4g}{g-2}(1 - \frac{\chi}{|G|}) + \Delta(G) -4 \leq -\frac{12\chi}{|G|} + \Delta(G) +8.
 \]
\end{itemize}
\end{theorem}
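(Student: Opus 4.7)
The unifying idea is to feed Theorem \ref{help2} with a well-chosen $3$-path $xyz$. The hypothesis $\delta(G)\geq 4$ is strong enough to discharge, for free, every side condition of Theorem \ref{help2}: first, $|V(G)|\geq \delta(G)+1\geq 5$; second, $\deg(x),\deg(z)\geq 4>1$; and third, any vertex $v\notin\{x,y,z\}$ still retains at least $\deg(v)-3\geq 1$ neighbor outside $\{x,y,z\}$, so $G-\{x,y,z\}$ has no isolated vertex. Thus the only real task, for each part of the theorem, is to exhibit a path $xyz$ whose total degree is as small as the stated source of ``small edges'' allows, with the third vertex contributing at most $\Delta(G)$.

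For parts (i)--(iii) the recipe is identical. I would pick an edge $e=xy$ whose endpoint-degree sum $\deg(x)+\deg(y)$ is minimized by the relevant theorem: the definition of $\xi(G)$ itself for (i), Theorem \ref{help33} for (ii), and Theorem \ref{help3} for (iii) (together with the ``no $3$-cycles'' refinement in each case). Since $\deg(y)\geq 4$, I can extend $e$ to a path $xyz$ by choosing any $z\in N(y)\setminus\{x\}$, and necessarily $\deg(z)\leq\Delta(G)$. Applying Theorem \ref{help2} and substituting gives $b_{tr}(G)\leq \deg(x)+\deg(y)+\Delta(G)-4$, which in the four sub-cases becomes exactly the bounds $\xi(G)+\Delta(G)-2$, $h_1(g)+\Delta(G)-4$, $h_2(g)+\Delta(G)-4$, $k_1(\overline g)+\Delta(G)-4$, and $k_2(\overline g)+\Delta(G)-4$.

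For (iv) I would invoke Theorem \ref{hra} to obtain a pair $u,v$ with $\deg(u)+\deg(v)\leq 2\,\mathrm{ad}(G)$ that is either adjacent or at distance $2$. If $u,v$ are at distance $2$ with common neighbor $y$, then $u,y,v$ is already the desired $3$-path, and $\deg(y)\leq\Delta(G)$ gives the bound. If $uv\in E(G)$, then (again using $\delta(G)\geq 4$) I pick $w\in N(v)\setminus\{u\}$ and work with the path $u,v,w$; once more the third vertex contributes at most $\Delta(G)$. In either case Theorem \ref{help2} yields $b_{tr}(G)\leq 2\,\mathrm{ad}(G)+\Delta(G)-4$.

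For (v), note first that $\delta(G)\geq 4$ forces the existence of a cycle, so $g=g(G)<\infty$, and in fact $g\geq 3$. The first inequality then comes by chaining (iv) with Theorem \ref{SGZ}: $b_{tr}(G)\leq 2\,\mathrm{ad}(G)+\Delta(G)-4\leq \frac{4g}{g-2}\bigl(1-\chi/|G|\bigr)+\Delta(G)-4$. The final cosmetic step is to observe that the function $g\mapsto \frac{4g}{g-2}$ is decreasing on $[3,\infty)$ and attains its maximum $12$ at $g=3$, which (using $1-\chi/|G|\geq 0$ since $\chi\leq 2\leq |G|$) gives the coarser bound $-12\chi/|G|+\Delta(G)+8$. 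The only mild subtlety in the whole argument is being careful in (iv) to handle the two alternatives offered by Theorem \ref{hra}, and in (v) to confirm $g\geq 3$ so that the coefficient $\tfrac{4g}{g-2}$ is well-defined and bounded; the rest is bookkeeping.
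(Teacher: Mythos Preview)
Your proof is correct and follows essentially the same strategy as the paper: choose an edge of small weight (via the definition of $\xi$, Theorem~\ref{help33}, Theorem~\ref{help3}, or Theorem~\ref{hra}), extend it to a $3$-path with the third vertex contributing at most $\Delta(G)$, and invoke Theorem~\ref{help2}, using $\delta(G)\ge 4$ to discharge the side conditions. The only cosmetic difference is that in part~(iv) the paper disposes of the complete-graph case separately via Theorem~\ref{help22a} before appealing to Theorem~\ref{hra}, whereas your argument handles $K_n$ uniformly; and in part~(v) you supply the justification that $g\ge 3$ and $1-\chi/|G|\ge 0$, which the paper leaves implicit.
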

\begin{proof}
(i) 
Since $\delta (G) \geq 4$,  there is  a path $x,y,z$ in $G$ such that 
$G-\{x,y,z\}$ has no isolated vertices and  $\xi(xy) = \xi (G)$.
Now, by Theorem \ref{help2} we have 
$b_{tr}(G) \leq deg(x) + deg(y) + deg(z) - 4 \leq \xi(G) + deg(z) - 2 
\leq \xi(G) + \Delta (G) -2$.

(ii) Combining (i) and Theorem \ref{help33} we obtain the required.

(iii) The result follows by combining Theorem \ref{help3} and (i). 

(iv)  By Theorem \ref{help22a} we know that $b_{tr}(K_n) = n-1$ whenever $n \geq 4$. 
Hence we may assume $G$ has nonadjacent vertices. 
 Theorem \ref{hra} implies that there are 
$2$ vertices, say $x$ and $y$, that are either adjacent
or at distance 2 from each other, with the property that 
$deg (x) + deg (y) \leq 2ad (G)$. 
 Since $G$ is connected and $\delta (G) \geq 4$, 
 there is a vertex $z$ such that $xyz$ or $xzy$ is a path.  
 In either case by Theorem \ref{help2}   we have 
 $b_{tr}(G) \leq deg(x) + deg(y) + deg(z) - 4 \leq 2ad(G) + \Delta(G) -4$. 
 
 (v) Theorem \ref{SGZ} and (iv) together imply the result. 
\end{proof}

It is well known that the minimum degree of any planar graph is at most $5$. 

\begin{theorem} \label{last}
Let $G$ be a planar graph with minimum degree $\delta(G) = 4+i$, $i \in \{0,1\}$. 
Then $b_{tr}(G) \leq 14-i$.
\end{theorem}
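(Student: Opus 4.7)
The plan is to combine Theorem \ref{help444} with Theorem \ref{help2}. Since the hypothesis $\delta(G) \ge 4+i \ge 4$ forces every vertex of $G$ to have degree at least $4$, the condition ``no two adjacent vertices have degree $3$'' of Theorem \ref{help444} is vacuously satisfied. Therefore $G$ contains a $3$-path $x$-$y$-$z$ realizing one of the eight listed types.

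Next I would filter the eight types by the minimum-degree assumption. A path of type $(a,b,c)$ requires some vertex of degree $\le \min(a,b,c)$, so types with a coordinate smaller than $\delta(G)$ are impossible in $G$. For $i=0$ this eliminates the four types containing a $3$, leaving $(4,4,9)$, $(6,4,8)$, $(7,4,7)$, $(6,5,6)$, whose largest coordinate sum is $18$. For $i=1$ every type containing a coordinate at most $4$ is excluded, leaving only $(6,5,6)$ with coordinate sum $17$. In both cases the $3$-path $x$-$y$-$z$ satisfies $\deg(x)+\deg(y)+\deg(z) \le 18-i$.

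It then remains to verify that $x,y,z$ meet the hypotheses of Theorem \ref{help2}. The inequality $\delta(G)\ge 4$ gives $|V(G)| \ge 5$ and $\deg(x), \deg(z) \ge 4 > 1$; moreover, any vertex $v \notin \{x,y,z\}$ has at most three neighbors inside $\{x,y,z\}$ and hence at least $\delta(G)-3 \ge 1$ neighbor outside, so $G-\{x,y,z\}$ is isolate-free. Applying Theorem \ref{help2} therefore yields
\[
b_{tr}(G) \le \deg(x) + \deg(y) + \deg(z) - 4 \le (18-i) - 4 = 14-i,
\]
as claimed.

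There is no real obstacle in this argument beyond the case analysis of the eight Borodin--Ivanova--Jensen--Kostochka--Yancey configurations; the proof is essentially a bookkeeping exercise that extracts the optimal coordinate sum surviving under each value of $\delta(G)$ and then invokes the Rad--Hasni--Raczek--Volkmann bound.
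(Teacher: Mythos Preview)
Your proof is correct and follows essentially the same approach as the paper's: invoke Theorem~\ref{help444} to obtain a $3$-path with $\deg(x)+\deg(y)+\deg(z)\le 18-i$, check that $\delta(G)\ge 4$ guarantees the isolate-free hypothesis, and then apply Theorem~\ref{help2}. You spell out the type-filtering and the verification of the hypotheses of Theorem~\ref{help2} in more detail than the paper does (and the paper's final reference to Theorem~\ref{help33} is evidently a typo for Theorem~\ref{help2}), but the argument is the same.
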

\begin{proof}
There is a path $x,y,z$ such that $deg(x) + deg(y) + deg(z) \leq 18-i$, 
because of Theorem \ref{help444}. 
Since $\delta (G) \geq 4$, $G-\{x,y,z\}$ has no isolated vertices. 
The result now follows by Theorem \ref{help33}.  
\end{proof}

\begin{problem}
Find a sharp constant upper bound for the total restrained bondage number of a planar graph.
\end{problem}

 Akbari, Khatirinejad and Qajar \cite{akq}  recently showed that
  the roman bondage number of any planar graph is not more than $15$. 
 In case when the planar graph has minimum degree $5$, we  improve this bound by $1$.
\begin{theorem} \label{lastl}
Let $G$ be a planar graph with  $\delta(G) = 5$.  
Then $b_{R}(G) \leq 14$.
\end{theorem}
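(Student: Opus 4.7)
The plan is to combine the structural lemma for planar graphs (Theorem \ref{help444}) with the Roman bondage bound on a path of length two (Theorem \ref{help2a}). The former supplies a 3-path with small degree sum, and the latter converts that into an upper bound for $b_R(G)$.

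First, I would check that the hypotheses of Theorem \ref{help444} are met. Since $\delta(G) = 5 \geq 3$, and no vertex at all has degree $3$, the condition ``no two adjacent vertices have degree $3$'' holds vacuously. Consequently $G$ must contain a $3$-path $xyz$ of one of the eight listed types
\[
(3,4,11),\ (3,7,5),\ (3,10,4),\ (3,15,3),\ (4,4,9),\ (6,4,8),\ (7,4,7),\ (6,5,6).
\]

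Second, I would exploit the assumption $\delta(G) = 5$ to rule out all types except one. Every type other than $(6,5,6)$ contains a coordinate equal to $3$ or $4$, which would force some vertex along the path to have degree strictly less than $5$; this contradicts $\delta(G) = 5$. Hence $G$ must contain a path $xyz$ of type $(6,5,6)$, i.e., with $\deg(x) \leq 6$, $\deg(y) \leq 5$, $\deg(z) \leq 6$, so that $\deg(x) + \deg(y) + \deg(z) \leq 17$.

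Finally, I would invoke Theorem \ref{help2a} on this path to conclude
\[
b_R(G) \leq \deg(x) + \deg(y) + \deg(z) - 3 \leq 17 - 3 = 14.
\]
There is essentially no obstacle here; the only nontrivial step is recognizing that the $\delta \geq 5$ hypothesis collapses the Borodin--Ivanova--Jensen--Kostochka--Yancey list to the single admissible type $(6,5,6)$, after which Theorem \ref{help2a} delivers the desired bound immediately.
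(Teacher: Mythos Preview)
Your proof is correct and follows essentially the same route as the paper: apply Theorem~\ref{help444} to obtain a $3$-path of bounded degree sum and then invoke Theorem~\ref{help2a}. You are in fact more explicit than the paper in justifying why the degree sum is at most $17$, since you spell out that $\delta(G)=5$ eliminates every listed type except $(6,5,6)$.
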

\begin{proof}
By  Theorem \ref{help444} there is a path $x,y,z$ such that $deg(x) + deg(y) + deg(z) \leq 17$. 
Since $b_R(G)  \leq deg(x) + deg(y) + deg(z) - 3$ (Theorem \ref{help2a}), the result follows. 
\end{proof}
Results on the roman bondage number of graphs on surfaces may be found in \cite{samCz}.

\end{document}